\newcommand{\N}{\mathbb{N}}
\newcommand \seq[1]{{\left\langle{#1}\right\rangle}}
\newcommand{\uhr}[1]{\! \upharpoonright_{#1}}
\newcommand{\cero}{\mathbf{0}}
\newcommand{\converge}{\!\!\downarrow}
\newcommand{\w}{\omega}
\renewcommand \phi{\varphi}
\newcommand{\comment}[1]{}
\newcommand{\pz}{$\Pi^0_1$\ }
\newcommand{\WW}{\mathcal{W}}
\newcommand{\RR}{\mathcal{R}}
\newcommand{\DD}{\mathcal{D}}
\theoremstyle{plain}
\newtheorem{theorem}{Theorem}
\newcounter{claimCounter}[theorem]
\newtheorem{proposition}[theorem]{Proposition}
\newtheorem{question}[theorem]{Question} 
\newtheorem{claim}[claimCounter]{Claim}
\theoremstyle{definition}
\newtheorem{definition}[theorem]{Definition} 
\theoremstyle{remark}
\begin{document}

\title[Realizing Computably Enumerable Degrees in Separating Classes]
{Realizing Computably Enumerable Degrees in Separating Classes}

\author{Peter Cholak}
\address{Department of  Mathematics\\
University of Notre Dame \\
South Bend, Indiana USA}
\email{cholak@nd.edu}
\thanks{}

\author{Rod Downey, Noam Greenberg,  and Daniel Turetsky}
\address{School of Mathematics and  Statistics\\
Victoria University of Wellington\\
PO Box 600, Wellington, New Zealand}
\email{Rod.Downey,noam.greenberg,dan.turetsky@vuw.ac.nz}
\thanks{}

\begin{abstract}
	We investigate what collections of c.e.\ Turing degrees can be realised as the collection of elements of a separating $\Pi^0_1$ class of c.e.\ degree. We show that for every c.e.\ degree $\mathbf{c}$, the collection $\{\mathbf{c}, \mathbf{0}'\}$ can be thus realized. We also rule out several attempts at constructing separating classes realizing a unique c.e.\ degree. For example, we show that there is no \emph{super-maximal} pair: disjoint c.e.\ sets~$A$ and~$B$ whose separating class is infinite, but every separator of c.e.\ degree is a finite variant of either~$A$ or $\overline{B}$. 
\end{abstract}

\thanks{The second author is partially supported by Marsden Fund of New Zealand. The first author is partially supported by  NSF-DMS-1854136, a FRG grant from the NSF.  We dedicate this paper to Hugh Woodin and Ted Slaman as part of their combined birthday celebrations.}
\maketitle

\date{\today}

\maketitle


\section{Introduction}
This paper is concerned with (computably bounded) \pz classes.
Of course we can consider these (up to Turing degree) 
as being a collection of infinite paths through a computable binary tree.
They have deep connections with computability theory in general,
as well as reverse mathematics, algorthmic randomness and many other areas.
See, for example, Cenzer and Jockusch \cite{CJ}.

The meta-question we attack concerns realizing c.e.\ degrees as 
members of \pz classes.
In this regard, we follow some earlier studies of 
Csima, Downey and Ng \cite{CDN} and Downey and Melnikov \cite{DM},
but in this case we will be looking at \emph{separating classes}.

Recall that 
one of the fundamental theorems in this area is the Computably Enumerable 
 Basis Theorem 
which says that each \pz class has a member of computably enumerable 
degree. Indeed, if $\alpha$ is the left- or right-most path of a 
\pz class $P$, then there is a c.e.\ set $W_e$ such that $W_e\equiv_T \alpha$.

\begin{definition}[\cite{CDN}] 
We will say that a c.e. degree ${\mathbf w}$
is \emph{realized} in a \pz class $P$ iff there exists some $\beta\in P$
with deg$_T(\beta)={\mathbf w}$.
\end{definition}

The fundamental question attacked in \cite{CDN}
was ``What sets of c.e.\ degrees can be realized in 
a \pz class?'' 
In \cite{CDN},
Csima, Downey and Ng   give a surprising 
 characterization of the sets of c.e.\ degrees that can be realized.
They showed that the question is related to one of 
representing index sets. 

For a set $\WW$ of c.e.\ degrees, let 
\[
	I(\WW) = \left\{ e \,:\,  W_e\in \WW \right\}
\]
be the set of indices of c.e.\ sets whose degrees are in~$\WW$. Letting $\RR$ be the collection of all c.e.\ degrees, for a \pz class~$P$, let 
\[
	\WW[P] = \left\{ \mathbf{w}\in \RR  \,:\,  \mathbf{w}\text{ is realized in }P  \right\}.
\]
A calculation shows: 
\begin{proposition}[\cite{CDN}] \label{indexpi4}
For any \pz class~$P$, the index set $I(\WW[P])$ is $\Sigma^0_4$.
\end{proposition}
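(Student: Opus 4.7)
The plan is to express $I(\mathcal{W}[P])$ via quantifiers over Turing-reduction indices, and then carefully track the arithmetical complexity. A degree $\mathbf{w} \in \mathcal{W}[P]$ is realised by some $\beta \in P$; since every path in $P$ is $\Delta^0_2$, the condition $\beta \equiv_T W_e$ is equivalent to the existence of indices $i, j$ such that $\Phi_i^{W_e}$ is total and lies in $P$, while $\Phi_j^{\Phi_i^{W_e}}$ computes $W_e$. So
\[
e \in I(\mathcal{W}[P]) \iff \exists i,\, j\, \bigl[\Phi_i^{W_e} \in P \text{ and } \Phi_j^{\Phi_i^{W_e}} = W_e\bigr],
\]
and the task reduces to showing that the bracketed condition is $\Pi^0_3$.

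First I would unfold the basic convergence statement. Since $W_e$ is c.e.\ with standard approximation $(W_{e,s})$, the relation $\Phi_i^{W_e}(n) \downarrow = v$ holds iff there exist $s$ and a use $u$ with $\Phi_{i,s}^{W_{e,s}}(n)\downarrow = v$ via oracle bits below $u$, and no new element enters $W_e$ below $u$ after stage $s$. The last clause is $\Pi^0_1$, so $\Phi_i^{W_e}(n) \downarrow = v$ is $\Sigma^0_2$, and hence $\Phi_i^{W_e} \uh m = \sigma$ is $\Sigma^0_2$ as well (a finite conjunction).

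Fixing a computable tree $T$ with $P = [T]$, the condition ``$\Phi_i^{W_e}$ is total and in $P$'' reads
\[
\forall n\, \exists \sigma\, \bigl[|\sigma| = n+1 \wedge \Phi_i^{W_e}\uh (n+1) = \sigma \wedge \sigma \in T\bigr],
\]
whose matrix is $\Sigma^0_2$, so the statement is $\Pi^0_3$. Similarly, ``$\Phi_j^{\Phi_i^{W_e}}(k)\downarrow = v$'' unfolds as existence of a finite $\tau$ with $\Phi_j^\tau(k)\downarrow = v$ and $\Phi_i^{W_e} \uh |\tau| = \tau$, which is $\exists\,\Sigma^0_2 = \Sigma^0_2$. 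Hence ``$\Phi_j^{\Phi_i^{W_e}} = W_e$'' becomes a $\forall k$ over the $\Sigma^0_2$ matrix $(\Phi_j^{\Phi_i^{W_e}}(k) = 1 \wedge k \in W_e) \vee (\Phi_j^{\Phi_i^{W_e}}(k) = 0 \wedge k \notin W_e)$, again giving $\Pi^0_3$. Conjoining the two $\Pi^0_3$ clauses and prepending $\exists i,\,j$ yields a $\Sigma^0_4$ description.

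There is no substantial obstacle; the proposition is essentially an arithmetic-complexity bookkeeping exercise. The one point that requires care is that ``$\Phi_i^{W_e}(n)\downarrow$'' is properly $\Sigma^0_2$ and not $\Sigma^0_1$: a convergent approximate computation can be destroyed if $W_e$ later enumerates a number below its use, so one must seal off the use with a $\Pi^0_1$ clause. This extra layer is what drives the final count up to $\Sigma^0_4$.
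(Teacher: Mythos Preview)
Your calculation is correct and is exactly the sort of quantifier-counting the paper has in mind; the paper itself does not spell out a proof, merely saying ``A calculation shows'' and citing~\cite{CDN}, so there is nothing substantive to compare against.

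One correction, though: the parenthetical ``since every path in $P$ is $\Delta^0_2$'' is false in general (e.g.\ $P = 2^\omega$, or the class of complete extensions of PA) and is also unnecessary for your argument. The equivalence
\[
e \in I(\WW[P]) \iff \exists i,j\ \bigl[\Phi_i^{W_e}\in P \ \wedge\ \Phi_j^{\Phi_i^{W_e}} = W_e\bigr]
\]
holds simply because $\beta \equiv_T W_e$ already forces $\beta \le_T W_e$, so $\beta = \Phi_i^{W_e}$ for some~$i$; no appeal to $\Delta^0_2$-ness of arbitrary paths is needed. With that clause deleted, the proof stands.
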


The first natural question is whether Proposition~\ref{indexpi4} reverses, that is, if every $\Sigma^0_4$ index set is realized as the collection of indices of c.e.\ sets whose degrees are realized in some \pz class. The answer is negative (see a discussion in~\cite{CDN}). To describe which sets can be realized, we observe that collections of degrees whose index sets are complicated can nonetheless have simple \emph{representations}, in the following sense. For a set $S\subseteq \N$, let 
\[
	\DD(S) = \left\{ \deg_T(W_e) \,:\,  e\in S \right\}
\]
be the collection of degrees of c.e.\ sets whose indices are in~$S$. Since Turing equivalence is $\Sigma^0_3$, if $S$ is $\Sigma^0_4$, then so is its closure under Turing equivalence, namely, $I(\DD(S))$ (denoted by $G(S)$ in~\cite{CDN}), is also $\Sigma^0_4$. But it is possible for~$S$ to be very simple but $I(\DD(S))$ to be complicated; for example, $I(\{\mathbf{0}'\})$ is $\Sigma^0_4$-complete, but of course $\{\cero'\} = \DD(S)$ where~$S$ is a singleton. For a complexity class~$\Gamma$, we say that a set~$\WW$ of c.e.\ degrees is \emph{$\Gamma$-representable} if $\WW = \DD(S)$ for some set~$S$ in the class~$\Gamma$. This notion gives the following characterization:

\begin{theorem}[Csima, Downey and Ng \cite{CDN}] \label{thm:CDN}
	The following are equivalent for a set~$\WW$ of c.e.\ degrees:
	\begin{itemize}
		\item[(i)] $\WW$ is $\Sigma^0_3$-representable;
		\item[(ii)] $\WW$ is computably representable; 
		\item[(iii)] $\WW = \WW[P]$ for some \pz class~$P$. 
	\end{itemize}
\end{theorem}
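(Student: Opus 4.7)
My plan is to establish the cycle $(ii) \Rightarrow (i) \Rightarrow (ii)$ and the pair $(ii) \Rightarrow (iii) \Rightarrow (i)$. The implication $(ii) \Rightarrow (i)$ is immediate, since every computable set is $\Sigma^0_3$. For $(iii) \Rightarrow (i)$, given a $\Pi^0_1$ class~$P \subseteq 2^\w$ with computable tree~$T$, I would set
\[
	S = \{ \langle e, i, j\rangle \,:\, \Phi_i^{W_e} \text{ is total, } \Phi_i^{W_e} \in [T], \text{ and } \Psi_j^{\Phi_i^{W_e}} = W_e \},
\]
and check, using compactness of~$[T]$ together with the $\Sigma^0_1$ nature of~$W_e$, that these conditions can be packaged into a $\Sigma^0_3$ definition of~$S$. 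The projection of~$S$ onto its first coordinate is then a $\Sigma^0_3$ set whose $\DD$-image is exactly~$\WW[P]$, since every realized degree arises as $\deg_T W_e$ for some~$e$ admitting reductions $\Phi_i, \Psi_j$ witnessing $W_e \equiv_T $ some path in~$P$, and conversely.

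The implication $(i) \Rightarrow (ii)$ takes more work. Assuming $\WW \ne \emptyset$, fix $e_0 \in S$ and write $e \in S$ as $\exists x\, \forall y\, \exists z\, R(e, x, y, z)$ for computable~$R$. For each tuple $(e, x, t)$ I would uniformly build a c.e.\ set $V_{e, x, t}$ whose Turing degree always lies in~$\WW$, and which equals $\deg_T W_e$ precisely when $(e, x)$ is a genuine witness to $e \in S$ and~$t$ is large enough to dominate the needed witnesses for the $\Pi^0_2$ formula. Roughly, at each stage $s \geq t$, one checks a monotone bounded version of the formula; while the check keeps passing, copy $W_{e, s}$ into $V_{e, x, t}$ on the even coordinates; as soon as it fails, abandon $W_e$-copying and enumerate~$W_{e_0}$ into $V_{e, x, t}$ on the odd coordinates. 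The delicate point, which determines the exact parameterization, is to rule out spurious $W_e$-degrees when $e \notin S$; this is achieved by coupling~$t$ to a growing bound on~$y$ so that the ``correct'' choice of parameters is forced to witness the full $\Pi^0_2$ condition. The $s$-$m$-$n$ theorem and standard padding then yield a computable set $S'$ with $\DD(S') = \WW$.

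The direction $(ii) \Rightarrow (iii)$ is where I expect the main obstacle. Given a computable list $\{W_{f(n)}\}$ with $\WW = \{\deg_T W_{f(n)} : n \in \N\}$, the goal is a computable tree~$T$ whose infinite paths realize exactly the degrees in~$\WW$. The naive attempt---take the downward closure of $\{\sigma_n \cdot \chi_{W_{f(n)}} : n\}$ for a computable antichain $\{\sigma_n\}$---fails on two fronts: each~$\chi_{W_{f(n)}}$ is only c.e., so the resulting tree is at best~$\Sigma^0_2$ rather than computable, and any infinite antichain of~$\sigma_n$'s accumulates to at least one new infinite path whose degree may escape~$\WW$. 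I plan to resolve both at once by arranging the~$\sigma_n$'s to accumulate toward~$\chi_{W_{f(0)}}$, so that the only unwanted limit path is itself of degree $\deg_T W_{f(0)} \in \WW$. Since~$W_{f(0)}$ is only c.e., the~$\sigma_n$'s are approximated in stages and revised finitely often as~$W_{f(0),s}$ grows; whenever~$\sigma_n$ is revised, the old string and all its extensions are enumerated into the complement of~$T$. Verifying that this finite-injury construction yields a computable tree with precisely the intended infinite paths---including showing that no stray accumulation paths survive---is the technical heart of the theorem.
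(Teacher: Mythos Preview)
This theorem is not proved in the present paper; it is quoted from Csima, Downey and Ng~\cite{CDN} as background in the introduction, and no argument is supplied here. There is therefore nothing in this paper against which to compare your proposal.

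On the proposal itself: your $(iii)\Rightarrow(i)$ sketch has a gap. A direct quantifier count makes your set~$S$ only $\Pi^0_3$, not $\Sigma^0_3$. Since~$W_e$ is merely c.e., the predicate ``$\Phi_i^{W_e}(y)=z$'' is $\Sigma^0_2$ (indeed $\Delta^0_2$) in its parameters, so ``$\Psi_j^{\Phi_i^{W_e}}(x)=W_e(x)$'' is $\Sigma^0_2$, and the outer universal quantifier over~$x$ yields $\Pi^0_3$. Compactness of~$[T]$ does not obviously collapse a quantifier here, and the distinction genuinely matters: immediately after stating the theorem, the paper notes that there is a $\Pi^0_3$-represented collection of c.e.\ degrees containing~$\cero$ which is \emph{not} of the form $\WW[P]$, so $\Pi^0_3$-representability is strictly weaker than $\Sigma^0_3$-representability. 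Your $(i)\Rightarrow(ii)$ and $(ii)\Rightarrow(iii)$ outlines are plausible in shape, and you correctly identify the last as the substantive construction; but to verify any of this against the intended arguments you will have to consult~\cite{CDN} directly.
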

They also show that the class~$P$ in (iii) can be taken to be perfect. As mentioned, not every set of c.e.\ degrees whose index-set is $\Sigma^0_4$ is thus realized; in~\cite{CDN}, the authors give an example of a $\Pi^0_3$-represented set of c.e.\ degrees which contains~$\cero$ but is not realized as the collection of c.e.\ degrees of elements of some \pz class.

\smallskip

Theorem~\ref{thm:CDN} implies that, for example,
the superlow c.e.\ degrees,  the $K$-trivial c.e.\ degrees, and all upper cones
are realizable.
Lower cones were classified by Downey and Melnikov.

\begin{theorem}[Downey and Melnikov \cite{DM}]
The lower cone $[{\bf 0, c}]$ is realizable iff ${\bf c}$ is $\textup{low}_2$ or ${\mathbf 0'}$.
\end{theorem}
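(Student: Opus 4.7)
The plan is to exploit Theorem~\ref{thm:CDN}, so that realizability of $[\mathbf{0}, \mathbf{c}]$ reduces to the question of whether $\{e : W_e \leq_T C\}$ is $\Sigma^0_3$ (where $C$ is any c.e.\ set of degree $\mathbf{c}$). In one direction, $S := \{e : W_e \leq_T C\}$ itself satisfies $\DD(S) = [\mathbf{0},\mathbf{c}]$; in the other, any $\Sigma^0_3$-representation $S'$ of $[\mathbf{0},\mathbf{c}]$ gives
\[
\{e : W_e \leq_T C\} = \{e : \exists n \in S'\ (W_e \equiv_T W_n)\},
\]
which is $\Sigma^0_3$ since $\equiv_T$ is.

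For the forward direction: if $\mathbf{c} = \mathbf{0}'$ then $\{e : W_e \leq_T C\} = \mathbb{N}$ is trivially $\Sigma^0_3$; if $\mathbf{c}$ is $\textup{low}_2$ then $C''\leq_T \emptyset''$, so $\Sigma^0_3(C) = \Sigma^0_1(C'') \subseteq \Sigma^0_1(\emptyset'') = \Sigma^0_3$, and the index set, always in $\Sigma^0_3(C)$, is therefore $\Sigma^0_3$. In either case Theorem~\ref{thm:CDN} delivers a realizing \pz\ class.

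For the converse I would assume $\mathbf{c} \neq \mathbf{0}'$ and that $[\mathbf{0}, \mathbf{c}]$ is realizable, and aim to show $\mathbf{c}$ is $\textup{low}_2$. Realizability gives $\{e : W_e \leq_T C\} \in \Sigma^0_3$. Since $C$ is c.e., $C' \leq_T \emptyset''$, so $C'' \in \Sigma^0_1(C') \subseteq \Sigma^0_3$ automatically; hence $C$ is $\textup{low}_2$ (i.e., $C'' \in \Delta^0_3$) iff $\overline{C''} \in \Sigma^0_3$. My plan is to establish this last point by $m$-reducing $\overline{C''}$ to $\{e : W_e \leq_T C\}$: uniformly in $x$, I would construct a c.e.\ set $V_x$ with
\[
V_x \leq_T C \;\Longleftrightarrow\; x \notin C'',
\]
so that $x \mapsto (\text{an index for})\ V_x$ is the desired reduction.

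The main obstacle is the construction of the family $V_x$. The condition $x \notin C''$ is $\Pi^0_1(C')$, approximated from~$C$ by a $\liminf$ along the $C$-enumeration of $C'$: at stage $s$ we may see some $\sigma \subseteq C'_s$ with $\Phi_{x,s}^\sigma(x)\converge$ and be tempted to enumerate a witness into $V_x$ so as to diagonalize against some $\Phi_i^C$, only to discover later that a 0-position of $\sigma$ has entered $C'$, whereupon the evidence for $x \in C''$ evaporates. Done naively this puts too many elements into $V_x$ in the ``$x \notin C''$'' case and spoils $V_x \leq_T C$. The fix is a priority construction on a tree of $\Pi^0_2(C)$-guesses for which $\sigma$ are stably below $C'$, so that only genuine witnesses lead to permanent commitments to diagonalization. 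The hypothesis $\mathbf{c} < \mathbf{0}'$ enters essentially here: it guarantees the existence of c.e.\ sets not $\leq_T C$, so that in the ``$x \in C''$'' branch $V_x$ can genuinely escape every $\Phi_i^C$. When $\mathbf{c} = \mathbf{0}'$, every c.e.\ set is $\leq_T C$ and no such uniform family could exist---consistent with $[\mathbf{0}, \mathbf{0}']$ being trivially realized by $2^\omega$.
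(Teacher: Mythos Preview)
The paper does not prove this theorem; it is quoted from Downey--Melnikov~\cite{DM} as background, with no argument supplied. So there is no ``paper's own proof'' to compare against here.

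That said, your plan is the natural one and is in line with how the result is proved in~\cite{DM}. The reduction, via Theorem~\ref{thm:CDN}, of realizability of $[\mathbf{0},\mathbf{c}]$ to the question of whether $\{e:W_e\le_T C\}\in\Sigma^0_3$ is correct in both directions, and your forward direction (low$_2$ or $\mathbf{0}'$ $\Rightarrow$ $\Sigma^0_3$) is exactly right. The substance is in the converse, and your target---an $m$-reduction of $\overline{C''}$ to $\{e:W_e\le_T C\}$ via a uniformly c.e.\ family $(V_x)_x$ with $V_x\le_T C\iff x\notin C''$---is the correct one. Your sketch of the construction is plausible but leaves the genuine difficulty unaddressed: in the $x\notin C''$ case you must argue that $C$ can actually compute $V_x$ despite the finitely many false alarms on which the construction acted, while in the $x\in C''$ case you must verify that the $\Sigma^0_2(C)$ witness really permits diagonalization against \emph{every} $\Phi_i^C$ (and here, as you correctly note, $C<_T\emptyset'$ is essential so that such diagonalization is not vacuous). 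These are standard priority-argument moves, but they are where the work lies; as a plan the outline is sound.
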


In the present paper we will consider similar questions for the 
what are arguably the most important \pz classes in terms of applications in 
Reverse Mathematics; the \emph{separating classes}. Recall that $P$ is a 
separating class if there exist c.e.\ disjoint sets $A$, $B$ such that 
$P=S(A,B)=\{Z\mid Z\supseteq A\land Z\cap B=\emptyset\}$.
For example, if $A$ represents the sentences provable in PA and $B$ the sentences refutable,
then $S(A,B)$ represents the complete extensions of PA.
Also, as is well known, if we desire to show that a theorem of second order 
arithmetic is as strong as WKL$_0$, 
then it suffices to code in separating classes,
in spite of the fact that Weak K\"onig's Lemma says that 
infinite binary trees have paths.
That is, we don't have to look at all infinite binary trees, only separating 
classes. For example, it is known that the theorem ``Every countable 
commutative ring with identity has a prime ideal'' is equivalent to 
WKL$_0$, and this is proven by Friedman, Simpson and Smith \cite{FSS1}
by coding a given separating class as the prime ideal structure of a 
commutative ring with identity, but their earlier claim (\cite{FSS})
that, given a \pz class $P$ there is a (computable) commutative ring 
with identity whose prime ideals are in 1-1 correspondence with the members of the class remains open.
We refer the reader to e.g. Downey and Hirschfeldt \cite{DH} or Simpson 
\cite{S}. 
The questions of what $\WW[S(A,B)]$
can be for various $A$, $B$ was asked in both \cite{CDN} and \cite{DM}.

\subsection{Our Results}  
Whilst an exact characterization is elusive, we do prove 
some results to show how different the situation is for separating classes.

We do know that strange ``separation sprectra''  can occur. 
For example, Jockusch and Soare \cite{JS}
showed that there two pairs of  pairwise disjoint c.e.\ sets $A_1,B_1, A_2,B_2$ 
such that every (not necessarily c.e.) separator 
of $A_1$ and $B_1$ is Turing incomparible with every separator of
$A_2$ and $B_2$.
This result was extended by Downey, Jockusch and Stob \cite{DJS}
who showed that four  such sets can be 
below a c.e.\ degree ${\bf a}$ iff ${\bf a}$ is array noncomputable.
Moreover, Jockusch and Soare \cite{JSjsl} extended their earlier 
result to construct 
the sets so that every separator of $A_1,B_1$ forms a minimal 
pair with each separator of $A_2,B_2$. This result was shown to 
be realizable below each \emph{promptly} array noncomputable
c.e.\ degree by Downey and Greenberg \cite{DGbook}.

The basic result used in the arguments in~ \cite{CDN} is that 
any c.e.\ singleton can be realized as a spectrum. 
Then, the authors use some computability theory 
approximation arguments for $\Sigma_3^0$-representable sets to 
get one direction of the characterization.

We do know  of two singletons which can be realized. One is the trivial one
${\mathbf 0}$ if we had a computable $A$ and considered $S(A,\overline{A})$,
but in the nontrivial situation where $\N-(A\sqcup B)$ is infinite,
the only  singleton we know of 
is the PA-one, namely~${\mathbf 0'}$. 
The following question is open:

\begin{question} Is any other singleton ${\mathbf a}$ 
realizable as $\WW(S(A,B))$?
\end{question}

We conjecture that the answer is ``no''.

We can show that the answer is no in the case that $A\equiv_{wtt} B$,
where $\le_{wtt}$ denotes
weak truth table reducibility.
In fact, in \S \ref{comp-2}, we prove 
the following ``upward closure'' result.

\begin{theorem}
Suppose $A \equiv_{wtt} B$ are c.e.\ sets such that:
\begin{itemize}
\item $A \cap B = \emptyset$; and
\item $|\w \setminus (A \cup B)| = \infty$.
\end{itemize}
Then for every $C \ge_T A$, there is a separator of $A$ and $B$ of the same Turing degree as $C$.
\end{theorem}

On the other hand, we really do need ''$A \equiv_{wtt} B$''
in the hypothesis, to force upward closure, as $A\le_{wtt} B$ is not enough, as we see in the next result.

\begin{theorem}
There are c.e.\ sets $A \ge_{wtt} B$ such that:
\begin{itemize}
\item $A \cap B = \emptyset$;
\item $|\omega \setminus (A \cup B)| = \infty$; and
\item No separator of $A$ and $B$ computes $\emptyset'$.
\end{itemize}
\end{theorem}

One of the most natural ways to possibly get a singleton would be to have 
$A\equiv_T B$ and such that 
$S(A,B)$ was  highly constrained in that members of c.e.\ degree $X$ would 
be ``close'' to $A$ and $B$. One place where this idea was used is where 
``c.e.\ degree'' was replaced by ``c.e.'' in, for example, Downey 
\cite{Dowthesis}. There $(A,B)$ is called a \emph{maximal pair} 
if whethever $X$ is a c.e.\ set separating $A$ and $B$ then 
either $X-A$ or $X-B$ is finite (also see Muchnik \cite{Muc}). Any simple c.e. set can be split into a 
maximal pair. They are quite useful in reverse mathematics as, for example,
in \cite{DLM}.
Thus it would be very nice if there was a a stronger version 
of maximal pair with c.e.\ \emph{set} replaced by 
set of c.e.\ \emph{degree}.
\begin{definition}
Two c.e.\ sets $A$ and $B$ form a {\em super-maximal pair} if the following hold:
\begin{itemize}
\item $A \cap B = \emptyset$;
\item $|\w - (A \cup B)| = \infty$; and
\item If $X$ is of c.e.\ degree with $A \subseteq X$ and $B \subseteq \overline{X}$, then $X =^* A$ or $\overline{X} =^* B$.
\end{itemize}
\end{definition}

Alas, no such pairs exist.

\begin{theorem}
Super maximal pairs do not exist.
\end{theorem}

We believe that this result is of independent interest aside from 
our interest in degrees of members of separating classes. The 
proof is surprisingly difficult and requires \emph{three} levels of 
nonuniformity, in the same way that the Lachlan Nondiamond Theorem 
(Lachlan \cite{La66})
needs one level of nonuniformity.
The only other example where exactly 3 levels are needed occurs in
an unpublished manuscript of Slaman where he shows that 
there is a c.e.\ degree ${\mathbf a}\ne {\mathbf 0}$ 
which is not the top of a diamond lattice in the Turing degrees.
Thus this proof is of some technical interest.

Giving up on \emph{one} degree, we ask whether \emph{two}
degrees are possible. This time the answer is yes, provided that one 
is ${\mathbf 0'}$.
One easy way to see this is to take $A$ a complete c.e. set with 
$\overline{A}$ introreducible. Then  $S(A,\emptyset)$ has spectrum
${\bf 0',0}$. The next result shows that ${\bf 0}$ can be replaced by any c.e.\
degree.

\begin{theorem}
For every c.e.\ degree $\mathbf{c}$, the separating spectrum $\{\mathbf{c}, \mathbf{0}'\}$ is possible.
\end{theorem}

We remark that we are unaware of any other definite spectrum which can be realized,
even for the two degree case.

\section{Wtt-results}\label{comp-2}

In this section we prove the comparibility results about weak truth table 
reducibility for $A$ and $B$.
The first shows that we can have no upward closure whilst having comparibility.

\begin{theorem}
There are c.e.\ sets $A \ge_{wtt} B$ such that:
\begin{itemize}
\item $A \cap B = \emptyset$;
\item $|\omega \setminus (A \cup B)| = \infty$; and
\item No separator of $A$ and $B$ computes $\emptyset'$.
\end{itemize}
\end{theorem}

\begin{proof}
We construct such sets.  To achieve $A \ge_{wtt} B$, we promise to never enumerate an element into $B$ unless we simultaneously enumerate a smaller element into $A$.

We build an auxiliary c.e.\ set $D$ and meet the following requirements:
\begin{itemize}
\item[$N_k$:] $(\exists x > k) [x \not \in (A \cup B)]$.
\item[$R_e$:] For any separator $Z \in S(A,B)$, $\Phi_e^Z \neq 
{D}$.
\end{itemize}
Clearly this will suffice.

\smallskip

{\em Strategy for $N_k$:} Wait for a stage $s > k$.  By construction, $s \not \in (A_s \cup B_s)$.  Restrain $(A\cup B)\uhr{s+1}$.

\smallskip

{\em Strategy for $R_e$:} We fix a restraint $r$ to be the stage at which the strategy was last initialized, such that the strategy will not be permitted to enumerate elements below $r$ into $A \cup B$.  The strategy repeats the following loop:
\begin{enumerate}
\item Claim a large $n$ not yet claimed by any strategy.
\item At stage $s$, search for a $\sigma \in 2^s$ such that:
\begin{itemize}
\item $\Phi_e^{\sigma}\uhr{n+1}[s] = D_s\uhr{n+1}$; and
\item For all $x < |\sigma|$, $x \in A_s \to \sigma(x) = 1$, and $x \in B_s \to \sigma(x) = 0$.
\end{itemize}
\item Having found such a $\sigma$, if there is an $m < |\sigma|$ such that $m \ge r$, $\sigma(m) = 1$ and $m \not \in A_s$, fix the least such.  For all $x \in [m, |\sigma|)$, if $\sigma(x) = 1$, then enumerate $x$ into $A_{s+1}$, and if $\sigma(x) = 0$, then enumerate $x$ into $B_{s+1}$.
\item Regardless of whether a desired $m$ exists, enumerate $n$ into $D$ and return to Step (1).
\end{enumerate}

\smallskip

{\em Construction:}  Arrange the strategies into a priority ordering.  At stage $s$, run the first $s$ strategies, in order of priority.  Whenever a strategy acts, initialize all lower priority strategies.

\smallskip

{\em Verification:} By construction, we never enumerate a number into~$B$ unless we simultaneously enumerate a smaller number into~$A$, and so $B \le_{wtt} A$.  Also, we never enumerate a number into $(A \cup B)[s+1]$ unless that number is smaller than~$s$.

\begin{claim}
Suppose the $R_e$-strategy is only initialized finitely many times.  Then it only enumerates finitely many numbers into~$D$.
\end{claim}

\begin{proof}
Let $r$ be the final restraint imposed on the strategy.  Towards a contradiction, suppose there are infinitely many stages at which the strategy enumerates an element into $D$, and list those which occur after the final time the strategy was initialized as $s_0 < s_1 < \dots$.  Fix $n_i$ the element enumerated at stage $s_i$, $\sigma_i$ the witnessing $\sigma$, and let $m_i$ be the selected $m$, if it exists, and otherwise let $m_i = s_i$.  By well-ordering properties, there are infinitely many $i$ such that for all $j > i$, $m_i \le m_j$.  Fix such an $i$.

Fix a $j > i$.  By construction, we have $n_i < n_{j}$.  So it cannot be that $\sigma_{j}$ extends $\sigma_i$, as that would give $\Phi_e^{\sigma_{j}}(n_i) = \Phi_e^{\sigma_{i}}(n_i) = 0$, but $n_i \in D_{s_i+1} \subseteq D_{s_{j}}$, contrary to our choice of $\sigma_{j}$.  So there must be some $y < |\sigma_i|$ with $\sigma_i(y) \neq \sigma_{j}(y)$.  By construction, $y < m_i$.

If $y \ge r$, we claim that $\sigma_i(y) = 0$.  For if not, then $\sigma_{i+1}(y) = 0$ implies $y \not \in A_{s_{i+1}} \supseteq A_{s_i}$, and so $y$ contradicts our choice of $m_i$.

So if $y \ge r$, $y \not \in A_{s_i}$.  Higher priority strategies will never act after stage $s_i$, and lower priority strategies were initialized and so have restraints greater than $y$, so neither can enumerate $y$ into $A$.  By choice of $i$, $y < m_i \le m_k$ for all $k \in [i, j]$, and so the action of this strategy cannot enumerate $y$ into $A$ before stage $s_j$.  So $y \not \in A_{s_j}$, and so if $y \ge r$, then $y$ is a viable candidate for $m_j$.  This contradicts $m_i < m_j$.  It follows that $y < r$, and so $\sigma_i\uhr{r} \neq \sigma_j\uhr{r}$, for all $j > i$.

But there are only finitely many strings in $2^r$, and so there can be only finitely many $i$ such that for all $j > i$, $m_i \le m_j$, contrary to our earlier observation.  The claim follows.
\end{proof}

It is now a simple induction to show that each strategy is only initialized finitely many times.  Clearly each $N_k$-strategy ensures its requirement.  Also each $R_e$-strategy must eventually wait forever at Step (2), and so there can be no separator computing $D$.  This completes the proof.
\end{proof}

The second result shows that if there is a realizable singleton which is 
not~${\mathbf 0} $ or~${\mathbf 0'}$, then we cannot use 
non-adaptive reductions.

\begin{theorem}
Suppose $A \equiv_{wtt} B$ are c.e.\ sets such that:
\begin{itemize}
\item $A \cap B = \emptyset$; and
\item $|\w \setminus (A \cup B)| = \infty$.
\end{itemize}
Then for every $C \ge_T A$, there is a separator of $A$ and $B$ of the same Turing degree as $C$.
\end{theorem}

\begin{proof}
Fix wtt-operators $\Gamma$ and $\Delta$ with $\Gamma^A = B$ and $\Delta^B = A$, and let $f$ be a computable function bounding the use of both $\Gamma$ and $\Delta$.  We assume that $f$ is monotonic and $f(x) > x$ for all $x$.  We split the argument into two cases, depending on whether or not there are infinitely many $x$ with $(x, f(x)] \subseteq A \cup B$.

\smallskip

{\em Case 1.}  Suppose there are only finitely many $x$ with $(x, f(x)] \subseteq A \cup B$.

Fix $k$ such that there are no such $x \ge k$.  Define the computable sequence: $m_0 = k$; $m_{i+1} = f(m_i)$.  Then we define a separator $Z$ such that if $n \not \in C$, then $Z$ agrees with $A$ on $(m_n, m_{n+1}]$, and if $n \in C$, then $Z$ agrees with $\overline{B}$ on $(m_n, m_{n+1}]$.  As $C$ computes $A$ (and thus $B$), $Z \le_T C$.

To establish $C \le_T Z$, note that $Z$ cannot agree with both $A$ and $\overline{B}$ on any $(m_n, m_{n+1}]$, as $(m_n, m_{n+1}] \not \subseteq A \cup B$.  Thus $n \in C$ iff $Z$ agrees with $\overline{B}$ on $(m_n, m_{n+1}]$ iff $Z$ differs from $A$ on $(m_n, m_{n+1}]$.  So $Z$ can determine whether $n \in C$ by waiting for a stage $s$ such that it agrees with either $A_s$ or $\overline{B}_s$ on $(m_n, m_{n+1}]$.

\smallskip

{\em Case 2.}  Suppose there are infinitely many $x$ with $(x, f(x)] \subseteq A \cup B$.

Define the following sequence:
\begin{itemize}
\item $m_0 = -1$;
\item $m_{n+1}$ is the least $x > m_n$ such that:
\begin{itemize}
\item $(m_n, x] \not \subseteq A \cup B$; and
\item $(x, f(x)] \subseteq A \cup B$.
\end{itemize}
\end{itemize}
By assumption, $m_n$ exists for all $n$.  We define a separator $Z$ such that if $n \not \in C$, then $Z$ agrees with $A$ on $(m_n, m_{n+1}]$, and if $n \in C$, then $Z$ agrees with $\overline{B}$ on $(m_n, m_{n+1}]$.

First observe that since $C$ computes $A$ (and thus $B$), $C$ computes $(m_n)_{n \in \omega}$.  Thus $C$ computes $Z$.

Next, we argue that $(m_n)_{n \in \omega}$ is computable from $Z$.  Suppose we have determined $m_i$ for $i \le n$.  Then let $s$ be a stage such that the following hold:
\begin{itemize}
\item For every $i < n$, $Z$ on $(m_i, m_{i+1}]$ agrees with either $A_s$ or $\overline{B}_s$; 
\item For every $i < n$, $(m_i, f(m_i)] \subseteq A_s \cup B_s$; and
\item There is some $x > m_n$ such that:
\begin{itemize}
\item $\Gamma^{A_s}\uhr{x+1} = B\uhr{x+1}$ and $\Delta^{B_s}\uhr{x+1} = A\uhr{x+1}$; 
\item $Z$ on $(m_n, x]$ agrees with either $A_s$ or $\overline{B}_s$;
\item $(m_n, x] \not \subseteq A_s \cup B_s$; and
\item $(x, f(x)] \subseteq A_s \cup B_s$.
\end{itemize}
\end{itemize}
Fix $y$ the least such $x$. 

\begin{claim}
$y = m_{n+1}$.
\end{claim}

\begin{proof}
Suppose not.  If $m_{n+1} > y$, then by minimality of $m_{n+1}$, it must be that $(m_n, y] \subseteq A \cup B$.  By choice of $y$, this means that some element $z \le y$ is enumerated into $A \cup B$ after stage $s$.

If $m_{n+1} < y$, then by minimality of $y$, it must be that $(m_{n+1}, f(m_{n+1})] \not \subseteq A_s \cup B_s$.  By monotonicity, $f(m_{n+1}) \le f(y)$, so there must be some element $z \le f(y)$ enumerated into $A \cup B$ after stage $s$.  By choice of $y$, $z \le y$.

In either case, we see that there is an element $z \le y$ enumerated into $A \cup B$ after stage $s$.  By choice of $s$ and correctness of $\Delta$, if $z \in A$, then there must be a $w \le f(z)$ such that $w$ is enumerated into $B$ after stage $s$, and by monotonicity and choice of $s$ and $y$, $w \le y$.  If $z \in B$, then symmetric reasoning shows there is a $w \le y$ enumerated into $A$ after stage $s$.  Without loss of generality, assume that $z$ is the least element enumerated into $A \cup B$ after stage $s$

If there is an $i < n$ such that $z \in (m_i, m_{i+1}]$, then by choice of $s$ and correctness of $\Gamma$ and $\Delta$, $w \in (m_i, m_{i+1}]$.  But $Z$ agrees with either $A_s$ or $\overline{B}_s$ on $(m_i, m_{i+1}]$ by choice of $s$, so there cannot be such $z$ and $w$: e.g.\ if $Z$ agrees with $A_s$, then $z \in A \setminus A_s$ contradicts $Z$ being a separator.  So it must be that $z,w \in (m_n, x]$.  But again, $Z$ agrees with either $A_s$ or $\overline{B}_s$ on $(m_n, x]$, so this is a contradiction.
\end{proof}

Having computed $(m_n)_{n\in \omega}$, $Z$ can determine whether $n \in C$ by waiting for a stage $s$ such that $Z$ on $(m_n, m_{n+1}]$ agrees with either $A_s$ or $\overline{B}_s$ (in fact, the stage $s$ used to determine $m_{n+1}$ suffices).
\end{proof}

\section{No Super-Maximal Pairs}
Recall that super-maximnal pairs are c.e.\ sets $A$, $B$ where 
all $X$ separating of c.e.\ degree must 
be finite variants of 
either $A$ or $B$.

\begin{theorem}
Super maximal pairs do not exist.
\end{theorem}

\begin{proof}
Let $A$ and $B$ be two c.e.\ sets satisfying $A \cap B = \emptyset$ and $|\w - (A\cup B)| = \infty$.  We will build a separating set $X$ of c.e.\ degree such that $X \neq^* A$ and $\overline{X} \neq^* B$.  This construction is necessarily non-uniform, however, so we may make up to three attempts.  If the first attempt fails, it will be because our set has either $X =^* A$ or $\overline{X} =^* B$.  If the second attempt fails, it will fail in the opposite manner.  Then the third attempt will succeed.

\medskip

{\em The First Attempt.} We build a computable sequence $(X_{1, s})_{s \in \omega}$ approximating $X_1$.  We begin with $X_{1, 0} = \emptyset$.  We also define the auxiliary sets $Y^n_{1, 0} = \emptyset$ for all $n$.

At stage $s+1$, we first define a sequence $x^1_{-1,s+1} < x^1_{0, s+1} < \dots < x^1_{k, s+1} = s$:
\begin{itemize}
\item $x^1_{-1,s+1} = -1$.
\item Given $x^1_{n, s+1} < s$, if $x^1_{n+1,s}$ is undefined or $x^1_{n+1,s} \le x^1_{n,s+1}$, we let $x^1_{n,s+1} = s$.
\item Given $x^1_{2i,s+1} < x^1_{2i+1,s} < s$, if there is a $y \in (x^1_{2i,s+1},x^1_{2i+1,s}]$ satisfying:
\begin{itemize}
\item $y \in X_{1,s} - (A_{s+1} \cup B_{s+1})$; or
\item $y \not \in A_{s+1} \cup B_{s+1}$, and there is some $z < y$ with $z \in X_{1,s} \cap B_{s+1}$ or with $z \in \overline{X}_{1,s} \cap A_{s+1}$; or
\item $y \not \in A_{s+1} \cup B_{s+1}$ and $y = s$,
\end{itemize}
then we let $x^1_{2i+1,s+1} = x^1_{2i+1,s}$.  Otherwise, we let $x^1_{2i+1,s+1} = s$.
\item Given $x^1_{2i+1,s+1} < x^1_{2i+2,s} < s$, if there is a $y \in (x^1_{2i+1,s+1},x^1_{2i+2,s}]$ satisfying:
\begin{itemize}
\item $y \in \overline{X}_{1,s} - (A_{s+1} \cup B_{s+1})$; or
\item $y \not \in A_{s+1} \cup B_{s+1}$, and there is some $z < y$ with $z \in X_{1,s} \cap B_{s+1}$ or with $z \in \overline{X}_{1,s} \cap A_{s+1}$; or
\item $y \not \in A_{s+1} \cup B_{s+1}$ and $y = s$,
\end{itemize}
then we let $x^1_{2i+2,s+1} = x^1_{2i+2,s}$.  Otherwise, we let $x^1_{2i+2,s+1} = s$.
\end{itemize}
Note that the sequence we define is strictly increasing.  It terminates when it achieves $s$.

On each interval $(x^1_{2i,s+1}, x^1_{2i+1,s+1}]$, we will attempt to ensure that $X \neq^* A$ by putting whatever elements we can into $X_1$.  On each interval $(x^1_{2i+1,s+1}, x^1_{2i+2,s+2}]$, we will do the opposite.  Of course, this is restricted by our need to make $X_1$ a separator of c.e.\ degree.  Accordingly, we make the following definition
\begin{definition}
We say that $y$ is {\em permitted at stage $s+1$} (for $X_1$) if $y \not \in A_{s+1} \cup B_{s+1}$, and $y = s$ or there is some $z < y$ with $z \in X_{1,s} \cap B_{s+1}$ or with $z \in \overline{X}_{1,s} \cap A_{s+1}$.
\end{definition}

We now define $X_{1,s+1}$ as follows:
\begin{itemize}
\item If $y \in A_{s+1}$, then $y \in X_{1,s+1}$.
\item If $y \in B_{s+1}$, then $y \not \in X_{1,s+1}$.
\item If $y \in (x^1_{2i,s+1}, x^1_{2i+1,s+1}]$ and is permitted at stage $s+1$, then $y \in X_{1,s+1}$.
\item If $y \in (x^1_{2i+1,s+1}, x^1_{2i+2,s+1}]$ and is permitted at stage $s+1$, then $y \not \in X_{1,s+1}$.
\item If none of the above apply, then $x \in X_{1,s+1} \iff x \in X_{1,s}$.
\end{itemize}
This completes the first construction.

\begin{claim}\label{claim:first_no_super_max_claim}
$(X_{1, s})_{s \in \omega}$ converges (in a $\Delta^0_2$ fashion) to a set $X_1$, and $X_1$ is a separator of $A$ and $B$ of c.e.\ degree.
\end{claim}

\begin{proof}
Observe first that if $x \in X_{1,s} \triangle X_{1,s+1}$, then one of the following must hold:
\begin{enumerate}
\item\label{no_super_max:first_permission} $x \in X_{1,s} \cap B_{s+1}$;
\item\label{no_super_max:second_permission} $x \in \overline{X}_{1, s} \cap A_{s+1}$;
\item For some $z < x$, one of (\ref{no_super_max:first_permission}) or (\ref{no_super_max:second_permission}) holds; or
\item $x = s$.
\end{enumerate}
By induction on $x$, each of these can occur only finitely many times for each $x$, and so the limit $X_1$ exists.

Now let $W = \{ x : \exists s\, x \in X_{1,s} \cap B_{s+1} \vee x \in \overline{X}_{1, s} \cap A_{s+1}\}$ with the obvious c.e.\ approximation $(W_s)_{s \in \omega}$.  If $x \in W_{s+1} - W_s$, then $x \in X_{1, s}\triangle X_{1, s+1}$.  Conversely, if $x \in X_{1, s}\triangle X_{1, s+1}$ for some $s > x$, then there is a $z \le x$ with $z \in W_{s+1} - W_s$.  Thus $X_1 \equiv_T W$, and so $X_1$ is of c.e.\ degree.

That $A \subseteq X$ and $X \cap B = \emptyset$ is immediate from our definition of $X_{1, s+1}$.
\end{proof}

Now, for each $n$, we consider the sequence $(x^1_{n, s})_{s \in \omega}$.  Note that not every $x^1_{n, s}$ will be defined.  However, if $x^1_{n, s}$ is defined for almost every $s$, we can consider whether the sequence has a limit.

\begin{claim}\label{claim:second_no_super_max_claim}
Suppose $x^1_n = \lim_s x^1_{n, s}$ exists with $x^1_n < \infty$ (and, implicitly, $x^1_{n, s}$ is defined for almost every $s$).  Then for every $j < n$, $x^1_j = \lim_s x^1_{j,s}$ exists, and $x^1_j < x^1_n$.  Further, if $n > -1$, then there is a $y \in (x^1_{n-1}, x^1_n]$ with $y \not \in A \cup B$, and $y \in X_1$ if and only if $n$ is odd.
\end{claim}

\begin{proof}
By construction, for $j < n$ and every $s$ at which $x^1_{n,s}$ is defined, $x^1_{j, s}$ is also defined.  So $x^1_{j, s}$ is defined for almost every $s$.  By construction, $x^1_{j,s}$ is nondecreasing in $s$ and bounded by $x^1_n$, so $x^1_j < x^1_n$ exists.

For $s$ with $x^1_{n,s+1} = x^1_n < s$, there is some $y$ witnessing that $x^1_{n,s+1} \neq s$.  This $y$ is in $X_{1,s+1}$ if and only if $n$ is odd by construction.  So by pigeon hole, there is some $y$ in $(x^1_{n-1}, x^1_n]$ with this property for infinitely many $s$, and thus for almost every $s$ (as the approximation to $X_1$ converges).  Thus $y \in X_1$ if and only if $n$ is odd.
\end{proof}

So if $x^1_n < \infty$ exists for every $n$, then $X_1$ is as desired.  So instead assume $\ell_1$ is the greatest $n$ such that $x^1_n < \infty$ exists.  Let $k_1 = \ell_1+1$, and WLOG assume $k_1$ is odd.  Observe that $x^1_{k_1, s+1}$ is defined for every $s > x^1_{\ell_1}$.

\begin{claim}\label{claim:third_no_super_max_claim}
For every $s$ with $x^1_{\ell_1,s} = x^1_{\ell_1}$ and $x^1_{k_1, s}$ defined, and for every $y \in (x^1_{\ell_1}, x^1_{k_1, s}] \cap X_{1, s}$, $y \in A \cup B$.
\end{claim}

\begin{proof}
Fix $t \ge s$ such that $x^1_{k_1, t+1} \neq x^1_{k_1, t} = x^1_{k_1, s}$.  Then, by definition of $x^1_{k_1, t+1}$, every such $y$ must be in $A_{t+1} \cup B_{t+1} \cup \overline{X}_{1, t}$.  But, by induction on $s' \in (s, t]$, if $y \not \in B$, then $y \in X_{1, s'}$.
\end{proof}

It follows that for any $y > x^1_{\ell_1}$, $y \in X_1 \iff y \in A$.

\smallskip

{\em The Second Attempt.}  We make a second attempt based on the knowledge of how the first attempt failed.  First, we perform a speedup of the enumerations of $A$ and $B$ and of the previous construction such that the following all hold:
\begin{itemize}
\item For each $n < k_1$ and every $s$, $x^1_{n, s} = x^1_n$.
\item For every $s$, $x^1_{k_1, s} > s$ is defined.
\item For every $y \in (x^1_{\ell_1}, s]$, $y \in A_s \cup \overline{X}_{1,s}$.
\end{itemize}

We now build $X_2$ in this new timeline.  We simply repeat the construction of $X_1$, except that we refer to the sequence we build at each stage as $(x^2_{n,s+1})$, to avoid confusion, and we always begin with $x^2_{-1, s+1} = x^1_{\ell_1}$.  Analogues of Claims \ref{claim:first_no_super_max_claim} and \ref{claim:second_no_super_max_claim} for $X_2$ proceed as the originals.  Note that for every $n$ and $s$ with $x^2_{n,s}$ defined, $x^2_{n,s} < x^1_{k_1,s}$.

Again, if $x^2_n < \infty$ exists for every $n$, then $X_2$ is our desired separator.  So instead assume $\ell_2$ is the greatest $n$ such that $x^2_n < \infty$ exists, and let $k_2 = \ell_2 + 1$.  Again, $x^2_{k_2,s+1}$ is defined for every $s > x^2_{\ell_2}$.

\begin{claim}\label{claim:no_super_max_even}
$k_2$ is even.
\end{claim}

\begin{proof}
Suppose not.  Fix an $s > x^2_{\ell_2} > x^1_{\ell_1}$ with $s \not \in A \cup B$.

Fix $t$ least with $x^2_{k_2, t+1} \ge s$.  We claim that $s$ is permitted for $X_2$ at stage $t+1$.  This is immediate if $s = t$, so suppose not.  Then $x^2_{k_2, t} < s < t$, so there is some $y \in (x^2_{\ell_2}, x^2_{k_2,t}] \cap X_{2,t}$ witnessing that $x^2_{k_2, t} < t-1$.  As this $y$ does not suffice for $t+1$, it must be that $y \in A_{t+1} \cup B_{t+1}$.  In second case, $y$ witnesses that $s$ is permitted for $X_2$ at stage $t+1$.  In the first case, since $y \not \in A_t$, $y \in \overline{X}_{1, t}$.  So $y$ witnesses that $s$ is permitted for $X_1$ at some point in the time period between stages $t$ and $t+1$ of our speedup.  But as $x^1_{\ell_1} < s < x^1_{k_1, s} \le x^1_{k_1, t}$, a definition $s \in X_1$ would be made.  This contradicts Claim~\ref{claim:third_no_super_max_claim}.

So $s$ is permitted for $X_2$ at stage $t+1$, and so $s \in X_{2, t+1}$.  But then $s$ will forever witness that $x^2_{k_2, s'}$ does not need to change, contrary to choose of $k_2$.
\end{proof}

Analogously to Claim~\ref{claim:third_no_super_max_claim}, we obtain the following.

\begin{claim}\label{claim:fourth_no_super_max_claim}
For every $s$ with $x^2_{\ell_2, s} = x^2_{\ell_2}$ and $x^2_{k_2,s}$ defined, and for every $y \in (x^2_{\ell_2}, x^2_{k_2, s}] \cap \overline{X}_{2, s}$, $y \in A \cup B$.
\end{claim}

It follows that for any $y > x^2_{\ell_2}$, $y \in X_2 \iff y \not \in B$.

\smallskip

{\em The Final Attempt.}  We make our final attempt based on the knowledge of how the first two attempts failed.  Again, we begin with a speedup of our previous speedup such that the following all hold:
\begin{itemize}
\item For each $n < k_2$ and every $s$, $x^2_{n, s} = x^2_n$.
\item For every $s$, $x^2_{k_2, s} > s$ is defined.
\item For every $y \in (x^2_{\ell_2}, s]$, $y \in X_{2, s} \cup B_s$.
\end{itemize}

Again, we always begin with $x^3_{-1, s+1} = x^2_{\ell_2}$.  We again prove analogues of Claims \ref{claim:first_no_super_max_claim} and \ref{claim:second_no_super_max_claim}.  Define $k_3$ to be the least such that $x^3_{k_3} < \infty$ does not exist.  We show two versions of Claim~\ref{claim:no_super_max_even}, one showing that $k_3$ cannot be odd by arguing to a contradiction with Claim~\ref{claim:third_no_super_max_claim}, and another showing that $k_3$ cannot be even by arguing to a contradiction with Claim~\ref{claim:fourth_no_super_max_claim}.  It follows that there is no such $k_3$, and so $X_3$ is as desired.

\end{proof}

\section{Two degrees}

In this section we examine degree spectra containing two degrees.

\begin{theorem}
For every c.e.\ degree $\mathbf{c}$, the separating spectrum $\{\mathbf{c}, \mathbf{0}'\}$ is possible.
\end{theorem}

\begin{proof}
Fix $C \in \mathbf{c}$.  We build disjoint c.e.\ sets $A$ and a $B$ with $|\omega \setminus (A \cup B)| = \infty$ and meeting the following requirements, for all $e$ and $n$:
\begin{itemize}
\item[$R_e$:] If $\Phi_e^{W_e} = Z$, $A \subseteq Z$, 
and $|\overline{Z} \setminus B| = \infty$, then $W_e$ computes $K$.
\item[$P_n$:] There is at most one $i$ with $\seq{n, i} \in B$, and such $i$, if it exists, is bounded by $n^2+1$.  Further, $n \in C \iff \exists i\, [\seq{n, i} \in B]$.
\end{itemize}

First we argue that meeting these requirements suffices.

By the $P_n$, $B \equiv_T C$.  For one direction, $n \in C \iff (\exists i \le n^2+1) [\seq{n, i} \in B]$, which is bounded quantification.  For the other, suppose we wish to determine whether $\seq{n, j}$ is in $B$.  We first check whether $n \in C$; if not, $\seq{n, j} \not \in B$.  If so, we enumerate $B$ until we see some $\seq{n, i} \in B$.  Then $\seq{n, j} \in B \iff i = j$.

Now, suppose $Z$ is a separator of c.e.\ degree.  If $|\bar{Z} \setminus B| = \infty$, then by the appropriate $R_e$, $Z$ is of degree $\mathbf{0}'$.  If instead $\bar{Z} =^* B$, then $Z$ is of degree $\mathbf{c}$.  In particular, $A$ itself is a separator of degree $\mathbf{0}'$, and $\overline{B}$ is a separator of degree $\mathbf{c}$.

We assume that for all $i$, $\seq{n, i} \ge n^3$.

\smallskip

{\em Strategy for $R_e$:}  We construct a c.e. operator $V_e$, with the intention that $V_e^{W_e} = \overline{K}$ if $\Phi_e^{W_e}$ is as described.  For each $m$, if $m \not \in [K \cup V_{e}^{W_{e}}][s]$, we search for an $x$ and a $\gamma$ satisfying the following:
\begin{itemize}
\item For all $n \le \max\{e, m\}$ and $i < n^2+1$, $x > \seq{n, i}$.
\item $x \not \in B_s$;
\item For all $y \le x$, $\Phi_e^{W_e\uhr{\gamma}}(y)[s]\converge$; and
\item $\Phi_e^{W_e\uhr{\gamma}}(x)[s] = 0$.
\end{itemize}
If these exist, we fix the least such $x$ and $\gamma$ (by standard use assumptions, we can minimize these both simultaneously), and we define $m \in V_e^{W_e\uhr{\gamma}}[s]$.  At every subsequent stage $t > s$, if $W_e\uhr{\gamma}[t] = W_e\uhr{\gamma}[s]$, then $x$ is {\em blocked from $B$} at stage $t$.

If instead $m \in [K \cap V_e^{W_e}][s]$, we fix the $x$ used in the definition of $m \in V_e^{W_e}[s]$.  If $x \not \in B_s$ (as we will later argue must be the case), we enumerate $x$ into $A$.

Otherwise, do nothing for $m$.

\smallskip

{\em Strategy for $P_n$:}  When $n$ is enumerated into $C$, fix the least $i$ such that $\seq{n, i} \not \in A_s$ and $\seq{n,i}$ is not blocked from $B$ at stage $s$ by any of the $R_e$.  We will later argue that $i < n^2+1$.  Enumerate $\seq{n, i}$ into $B$.

\smallskip

{\em Construction:}  Simply run all of the above strategies simultaneously.  We take no care to ensure that different $R$-strategies are choosing different witnesses $x$.

\smallskip

{\em Verification:}  By construction, we never enumerate an element into $A \cup B$, and so $A$ and $B$ are disjoint.  Next, we show that our claim in the $P_n$-strategy holds.

\begin{claim}
At any stage $s$, for every $n$,
\[
\# \{ i < n^2+1: \seq{n, i} \in A_s \vee \text{$\seq{n,i}$ is blocked from $B$ at stage $s$}\} \le n^2.
\]
\end{claim}

\begin{proof}
The only way for $x = \seq{n, i}$ to be blocked or enumerated into $A$ is for it to be selected by some $R_e$-strategy on behalf of some $m$.  By our choice of such $x$ in the $R_e$-strategy, it must be that $e, m < n$.  Further, no pair $(e, m)$ can contribute more than one $x$ in this fashion: the $R_e$-strategy blocks at most one $x$ at a time on behalf of each $m$, and if the strategy has enumerated an $x$ on behalf of $m$, then $m \in C_s$ and so the strategy will not block or enumerate another element on behalf of $m$.

The claim follows.
\end{proof}

Thus our $P_n$-strategy meets its requirement.

\begin{claim}
For all $k$,
\[
|(A \cup B) \cap k^3| \le O(k^2).
\]
It follows that $|\omega \setminus (A \cup B)|$ is infinite.
\end{claim}

\begin{proof}
By construction, since $\seq{k,0} \ge k^3$, the only strategies which can enumerate elements into $A \cap k^3$ are $R_e$-strategies with $e < k$, and then only on behalf of some $m < k$.  As in the previous claim, each pair $(e, m)$ can contribute at most one such enumeration, and so $|A \cap k^3| \le k^2$.

Also, the only strategies which can enumerate elements into $B \cap k^3$ are $P_n$-strategies with $n < k$, and each necessarily enumerates at most one element.  So $|B \cap k^3| \le k$.

The claim follows.
\end{proof}

\begin{claim}
Each $R_e$-strategy meets its requirement.
\end{claim}

\begin{proof}
Fix $m$.  Suppose first that $m \not \in K$.  Fix the least $x \not \in (Z \cup B)$ such that $x > \seq{n, i}$, for all $n \le \max\{e, m\}$ and $i < n^2+1$.  Fix $\gamma$ least such that $\Phi_e^{W_e\uhr{\gamma}} \supseteq Z\uhr{x+1}$, and fix $s_0$ such that $W_e\uhr{\gamma} = W_{e, s_0}\uhr{\gamma}$.  Then at any stage $s > s_0$ with $m \not \in V_e^{W_e}[s]$, $x$ and $\gamma$ will be chosen for the new definition of $m \in V_e^{W_e\uhr{\gamma}}[s]$.  By choice of $s_0$, this ensures $m \in V_e^{W_e}$.

If instead $m \in K$, then fix $s_0$ least with $m \in K_s$.  By construction, we will never define $m \in V_e^{W_e}[s]$ for any $s \ge s_0$.  So suppose $m \in V_e^{W_e}[s_0]$ because of our action at some stage $t < s_0$.  Fix the witnessing $x$ and $\gamma$.  Then necessarily, $W_{e,r}\uhr{\gamma} = W_{e, s_0}\uhr{\gamma}$ for every $r \in [t, s_0]$, and so $x$ was blocked from $B$ at every such stage.  Also, $x \not \in B_t$.  By construction, $x \not \in B_{s_0}$.  So $x$ will be enumerated into $A$ at stage $s_0$.

By assumption, $\Phi_e^{W_e\uhr{\gamma}}(x)[t] = \Phi_e^{W_e\uhr{\gamma}}(x)[s_0] = 0$.  So if $\Phi_e^{W_e} = Z$ contains $A$, and in particular contains $x$, it must be that $W_{e,s_0}\uhr{\gamma} \neq W_e\uhr{\gamma}$.  Since no future stage $s$ will define $m \in V_e^{W_e}[s]$, it follows that $m \not \in V_e^{W_e}$.
\end{proof}

This completes the proof.
\end{proof}


\end{document}